\documentclass[usemulticol,english]{ccdconf}
\usepackage[utf8]{inputenc}
\pdfoutput=1

\usepackage{amsmath}
\usepackage{amssymb}
\usepackage{amsthm}
\usepackage{graphicx}

\theoremstyle{definition}
\newtheorem{theorem}{Theorem}
\newtheorem{corollary}{Corollary}
\newtheorem{definition}{Definition}

\newtheorem{example}{Example}

\usepackage[maxbibnames=6,doi=false,isbn=false,url=false,eprint=false,style=ieee,citestyle=numeric-comp]{biblatex}
\addbibresource{Reference.bib}

\AtEveryBibitem{\clearfield{issue}}

\renewbibmacro{in:}{}

\DeclareFieldFormat[article, inbook, incollection, inproceedings, misc, thesis, unpublished]{title}{#1}
\DeclareFieldFormat{journaltitle}{#1\isdot}
\DeclareFieldFormat[article, inbook, incollection, inproceedings, misc, thesis, unpublished]{volume}{Vol.#1}

\usepackage{diagbox}
\usepackage{blkarray}

\usepackage{titlesec}
\titlespacing*{\section}{0pt}{3ex}{1ex}

\begin{document}

\title{Payoff Control in Repeated Games}
    
\author{{Renfei Tan}\aref{pku}, Qi Su\aref{upenn}, Bin Wu\aref{bupt,corr}, Long Wang\aref{pku,corr}}

    \affiliation[pku]{Center for Systems and Control, College of Engineering, Peking University, Beijing, 100871, China
            \email{tanrenfei@pku.edu.cn, longwang@pku.edu.cn}}
    \affiliation[upenn]{Department of Mathematics and Department of Biology, University of Pennsylvania, PA, USA
            \email{qisu1991@sas.upenn.edu}}
    \affiliation[bupt]{School of Science, Beijing University of Posts and Telecommunications, Beijing 100876, China
            \email{bin.wu@bupt.edu.cn}}
    \affiliation[corr]{Corresponding Authors.}

\maketitle

\begin{abstract}
    Evolutionary game theory is a powerful mathematical framework to study how intelligent individuals adjust their strategies in collective interactions. 
    It has been widely believed that it is impossible to unilaterally control players' payoffs in games, since payoffs are jointly determined by all players. 
    Until recently, a class of so-called zero-determinant strategies are revealed, which enables a player to make a unilateral payoff control over her partners in two-action repeated games with a constant continuation probability.
    The existing methods, however, lead to the curse of dimensionality when the complexity of games increases. 
    In this paper, we propose a new mathematical framework to study ruling strategies (with which a player unilaterally makes a linear relation rule on players' payoffs) in repeated games with an arbitrary number of actions or players, and arbitrary continuation probability. 
    We establish an existence theorem of ruling strategies and develop an algorithm to find them.
    In particular, we prove that strict Markov ruling strategy exists only if either the repeated game proceeds for an infinite number of rounds, or every round is repeated with the same probability. 
    The proposed mathematical framework also enables the search of collaborative ruling strategies for an alliance to control outsiders. 
    Our method provides novel theoretical insights into payoff control in complex repeated games, which overcomes the curse of dimensionality.
\end{abstract}

\keywords{Game Theory, Repeated Games, Payoff Control, Dynamical Systems, Ruling Strategy}

\section{INTRODUCTION}

Game theory was first developed in economics to describe how rational individuals make decisions when facing conflicts \cite{Neumann1944,Nash1950}. It has been widely used in computer science, physics, management science, etc.
When playing games, one's payoff depends on not only her own action but also her opponent's.
Typically, in the symmetric two-player two-action game, each player can choose either cooperation or defection.
The mutual cooperation brings each player a reward $R$ and the mutual defection leads to punishment $P$; the unilateral cooperation gives the cooperator a sucker payoff $S$ and the defector a temptation $T$. Different ranks of $R,S,T,P$ lead to different conflicts.

Evolutionary game theory enriches the classical game theory by introducing adaptive action learning \cite{Smith1973,Smith1974}.  
It is a powerful tool to study the traits' evolution and stabilization in systems consisting of interacting individuals, ranging from engineering, economics to sociology. 
A representative example is the evolution of large-scale cooperation, a prevailing phenomenon in various complex systems \cite{Nowak2006}. 
Prisoner's dilemma, one of the classical two-player two-action games, well captures the conflicts between individuals and groups.
It describes that one's choosing defection is always better than cooperation in terms of her own benefit, which then leads to mutual defection, a worse outcome for the group interests than the mutual cooperation.
The last three decades have seen numerous efforts on this topic with the aid of evolutionary game theory \cite{Wu2010,Su2019,Li2020}. 

Direct reciprocity is one of the major mechanisms responsible for the evolution of cooperation.
It tells that interacting individuals play games repeatedly and adjust their actions from round to round.
Repeated games, therefore, provide the flexibility for players to seek a good strategy to dominate the opponents in the long-term interactions \cite{HilbeArne2015,Stewart2012}.
So far, there are a few good strategies, such as Tit-for-tat (TFT, taking the opponent's action used last round) \cite{Axelrod1981}, and win-stay, lose-shift (WSLS, taking the same action if performed well last round and switching otherwise) \cite{Nowak1993}.

A strategy that exerts payoff control is presumably a good strategy, since it controls payoffs and guarantees an advantage.
But it could be difficult to find such control strategies.
This is because, on the one hand, the payoff is determined by the action profiles of all players, on the other hand, each player is only able to decide her own action. 
Recently a class of strategies called \textit{zero-determinant (ZD) strategies} \cite{Press2012} have been discovered.
ZD strategies enable a player to \textit{unilaterally} enforce a linear payoff relation between the two players in repeated prisoner's dilemma. 
These strategies are able to pin the opponent's payoff or guarantee an above-average payoff for the controller, whatever strategies her opponent uses.
The discovery of ZD strategies explicitly indicates that payoff control exists in repeated games.
Such a surprising fact has sparked a surge of interests in searching for such control strategies in other repeated games: ZD strategies have been found in repeated symmetric public goods games, games with continuous action sets, and finitely repeated prisoner's dilemma \cite{HilbeWu2014,Pan2015,McAvoy2016}.
The evolutionary performance of ZD strategies in two-player and multiplayer social dilemma is discussed in \cite{Adami2013,Stewart2013,HilbeWu2015,Hilbe2013}.
In addition, behavioral experiments demonstrates the existence of ZD-like strategies in human societies \cite{HilbeTor2014}. 

The previous framework confronts a mathematical problem when applied to other repeated games beyond the prisoner's dilemma. 
Searching for zero-determinant strategies, as the name indicates, involves operating on a matrix to make its determinant vanish. To verify a ZD strategy in an \(n\)-player \(m\)-action repeated game, one needs to construct an \(m^n\times m^n\) transition matrix with \(m^{2n}\) entries. The matrix grows exponentially with the number of players, leading to the curse of dimensionality.
This problem hinders the use of the determinant method in complex repeated games.

In this paper, we focus on the algebraic structure of the strategy space to provide a new mathematical framework. 
To this end, we formally define ruling strategy, with which a player is able to unilaterally make a payoff rule between players. 
The algebraic perspective greatly simplifies the process of finding ruling strategies in more complicated games. 
In the same \(n\)-player \(m\)-action game, with our method one only needs to solve a linear equation with \(n+r+1\) variables (we later show that for Markov ruling strategies \(r=m-1\)), thus removing the curse.
It also enables the search for \textit{collaborative} ruling strategies, by which multiple rulers work collectively to rule outsiders.
We show that working collectively improves the ability to control payoffs, which highlights the synergy effects in alliances \cite{Wu2016}.
We develop a novel searching algorithm applicable to \textit{any} repeated games with finite players and finite action sets. 
Additionally, we show that strict Markov ruling strategies exist only if the game either has infinite expected rounds, or every round is repeated with the same probability.

The paper is organized as follows. 
In Section \ref{Sec_Preliminaries}, preliminaries are provided concerning the repeated game model. 
In Section \ref{Sec_ConstraintStrategy}, we formally define the ruling strategy and collaborative ruling strategy. 
We discuss their control mechanisms, based on which a searching algorithm for these strategies is proposed. 
In Section \ref{Sec_method}, we establish a theorem to seek Markov ruling strategies.
In Section \ref{Sec_Discussion} we discuss further applications of our algorithm and end up with conclusions.

%Memory-one strategy -> Markov strategy, change the definition.

\section{PRELIMINARIES}
\label{Sec_Preliminaries}

In this section, we formulate the repeated game model.
In our model, repeated games consist of two parts, the game being played each round (called \textit{base game}), and the probability to play another round (called \textit{continuation probability}). 
Then we introduce the \textit{history} and a \textit{strategy} in a repeated game.
Particularly, we define the \textit{effective payoff} for a player, i.e., the payoff controlled by ruling strategies.

\subsection{Games in strategic forms}
\label{Subsec_Strategicform}

We first define an $n$-player base game \cite{Maschler2013}, which is played by $n$ players, labelled by the set \(N=\{1,2,\dotsb,n\}\).
Each player $i$ has $m_i$ action options, labelled by the set \(A_i=\{a_i^1,a_i^2,\dotsb,a_i^{m_i}\}\). 
Note that for different players, their action sets do not necessarily overlap. 
In each base game, every player chooses an action.
Player $i$'s payoff $u_i(\mathbf{a})$, is determined by the formed \textit{action profile} $\mathbf{a}=(a_1,a_2,\dotsb,a_n)$.
The set of all action profiles is \(A=A_1\times A_2\times \dotsb \times A_n\) and the set size is $\Pi_{i=1}^n m_i$.
Let $\Pi_{i=1}^n m_i$-entry tuple $\mathbf{u}_i$ denote player $i$'s payoff in all action profiles. $\mathbf{u}_i$ is the \textit{payoff vector} of \(i\).

More generally, a player is expected to take \textit{mixed actions} rather than pure actions. A mixed action records the probability to take a certain action. For example, if player $i$ uses mixed action $p_i$, she takes action $a_i$ with probability $p_i(a_i)$.
The set of mixed actions of player $i$ is defined as:
    \begin{equation}
        \Sigma_i = \bigg\{p_i : A_i \to [0,1]\Big\vert \sum_{a_i\in A_i}p_i(a_i)=1\bigg\}.
    \end{equation}
When players take mixed actions $p_1,p_2,\dotsb,p_n$, the interacting scenario corresponds to a probability distribution over action profile set $A$. The probability of an action profile is the product of probability that every player takes the corresponding action. 
Let $\mathbf{v}$ denote the probability distribution of all action profiles and $v(\mathbf{a})$ the probability of action profile $\mathbf{a}$. 
Then player $i$'s expected payoff $u_i$ is an inner product between her payoff vector and the probability distribution:
\begin{equation}
    u_i(\mathbf{v})=\sum_{\mathbf{a}\in A}\mathbf{u}_i(\mathbf{a})v(\mathbf{a}).
\end{equation}  
For simplicity, we denote it as
    \begin{equation}
        u_i(\mathbf{v}) = \langle \mathbf{u}_i,\mathbf{v} \rangle.
    \end{equation}

\begin{example}[Prisoner's dilemma and mixed actions]
We consider the prisoner's dilemma, a kind of two-player two-action game.
These payoffs satisfy \(T > R > P > S\) and \(2R > T+S\).
On the one hand, \(2R > T+S\) and \(R>P\) assure that cooperation is beneficial for the group. On the other hand, \(T>R\) and \(P>S\) assure that rational individuals would defect regardless of the opponent's action. Thus, it leads to a game with a conflict between individual and group benefit.
The quantities $\mathbf{a}, \mathbf{u}_i$, and $\mathbf{v}$ are 
    \begin{equation}
        \begin{blockarray}{ccccc}
            \mathbf{a}= & \mbox{CC} & \mbox{CD} & \mbox{DC} & \mbox{DD}\\
            \begin{block}{c[cccc]}
                \mathbf{u}_1= & R & S & T & P\\
            \end{block}
            \begin{block}{c[cccc]}
                \mathbf{u}_2= & R & T & S & P\\
            \end{block}
            \begin{block}{c[cccc]}
                \mathbf{v}= & v(\small{\mbox{CC}}) & v(\small{\mbox{CD}}) & v(\small{\mbox{DC}}) & v(\small{\mbox{DD}})\\
            \end{block}
        \end{blockarray},
    \end{equation}
where in $\mathbf{a}$, the first element is player $1$'s action and the second element player $2$'s action.
Players' payoffs are given by \(u_1(\mathbf{v})=\langle \mathbf{u}_1,\mathbf{v} \rangle\) and \(u_2(\mathbf{v}) = \langle \mathbf{u}_2,\mathbf{v} \rangle\).
\end{example}

\subsection{Repeated games}
\label{Subsec_RepeatedGames}

The most intensively studied repeated games are games that repeat infinitely and games that repeat for a finite amount of times.
We focus on more general repeated games which we name as \textit{generalized repeated games}.
In a generalized repeated game, after finishing each round, a time-dependent chance move decides whether the repeated game proceeds or not.
The probability to continue the game is called the continuation probability.
This model includes the two well-known types of repeated games, yet provides a variety of other repeated games.

\begin{definition}[Generalized repeated game]
    Let \(\Gamma = (N,(\Sigma_i)_{i\in N},(\mathbf{u}_i)_{i\in N})\) be an \(n\)-player game. A \textit{generalized repeated game} is
    \begin{equation}
        \Gamma_{GR}=\big(N,(\Sigma_i)_{i\in N},(\mathbf{u}_i)_{i\in N},c\big), 
    \end{equation}in which
    \begin{itemize}
        \item \(N,(\Sigma_i)_{i\in N},(\mathbf{u}_i)_{i\in N}\) are player set, mixed action sets and payoff vectors respectively, as defined in Section \ref{Subsec_Strategicform}.
        \item \(c:\mathbb{Z}^+\to [0,1]\) is a function mapping each positive integer \(t\) to a probability \(c(t)\). \(c(t)\) is the \textit{continuation probability} in round \(t\).
    \end{itemize}
\end{definition}

Players play the base game in the first round for sure. 
But whether or not the game proceeds depends on the continuation probability $c$.
Specifically, after finishing $t_{th}$ round, they play $(t+1)_{th}$ round game with probability $c(t)$ and stop otherwise.
The probability that the game proceeds at least $t$ rounds is
    \begin{equation}
        p(t)=
        \begin{cases}
            1 & t=1,\\
            c(1)c(2)\dotsb c(t-1) & t>1.\\
        \end{cases}
    \end{equation}
When $\forall t, c(t)=1$, players play infinite rounds of games.
Another example is $\forall t, c(t)=\delta$ ($\delta \in [0,1)$).
That is, after each round, the next round proceeds with a constant probability $\delta$, which is called $\delta$-repeated game.
$\delta=0$ recovers a one-shot game.

Compared with one-shot games, repeated games provide players with chances to reciprocate partners based on prior interactions. 
The information available to all players in round $t+1$ is the actions played in the first \(t\) rounds of the game.
Let \(h^t=(\mathbf{a}^1,\mathbf{a}^2,\dotsb,\mathbf{a}^t)\) denote a $t$-round interaction \textit{history}, where \(\mathbf{a}^\tau\) is the action profile in round \(\tau\).
Therefore, the set of $t$-round history \(H(t)\) is defined as:
    \begin{equation}
        H(t):=A^t=\underbrace{A\times A\times \dotsb \times A}_{t\; \mbox{\footnotesize{times}}}.
    \end{equation}
Particularly, $H(0)=\{\emptyset\}$ and $h^0=\emptyset$, which indicates that there is no history when game starts.

A \textit{strategy} is an action plan on what mixed action to play after every possible history, for player \(i\) that is a function \(s_i\) mapping each finite history to a mixed action:
    \begin{equation}
        s_i : \bigcup_{t=0}^{\infty} H(t) \to \Sigma_i.
    \end{equation}
Intuitively, with strategy \(s_i\) and history \(h^t\), the conditional probability that player \(i\) chooses action \(a_i\) is given by:
    \begin{equation}
        p_i(a_i \mid h^t)=[s_i(h^t)](a_i).
    \end{equation}

When players' strategies are given, the probability distribution of actions profiles at any round $t$, i.e. $\mathbf{v}^t$, can be step-by-step calculated. 
Thus the expected payoff for player $i$ at round $t$ \(u_i^t=\langle \mathbf{u}_i,\mathbf{v}^t \rangle\). The expected payoffs are then used to obtain a player's effective payoff, which is used to evaluate her overall performance.

\begin{definition}[Effective payoff]
    In a generalized repeated game, a player $i$'s \textit{effective payoff} is defined as:
    \begin{equation}
        \bar{u}_i=
        \lim_{t\to\infty}
        \frac{p(1)u^1_i + p(2)u^2_i + \dotsb + p(t)u^t_i}
        {p(1) + p(2) + \dotsb + p(t)}.
    \end{equation}
\end{definition}

The effective payoff is a weighted average of each round's expected payoff. 
In fact, the numerator is the sum of payoffs over all rounds, with the weights \(p(\tau)\) being the probabilities that the repeated game proceeds to round $\tau$. 
The denominator, for \(t\) approaching infinity, corresponds to the expected rounds played.
For simplicity, denote by
    \begin{equation}
        \bar{\mathbf{v}}(t) = \frac{p(1)\mathbf{v}^1 + p(2)\mathbf{v}^2 + \dotsb + p(t)\mathbf{v}^t} {p(1) + p(2) + \dotsb + p(t)}
    \end{equation}
the weighted average of distributions of the first $t$ rounds.
Then player \(i\)'s effective payoff satisfies: 
    \begin{equation}
        \bar{u}_i=
        \lim_{t\to\infty}\langle \mathbf{u}_i, \bar{\mathbf{v}}(t) \rangle.
    \end{equation}
Therefore, a player's effective payoff is jointly determined by strategies from all the players, i.e.
\begin{equation}
    \bar{u}_i = \bar{u}_i ( s_1 , s_2 , \dotsb , s_n),\;\;\;\;\forall i\in N.
\end{equation}

In the sequel, we refer to a player's \textit{effective payoff} as \textit{payoff}.

\section{RULING STRATEGY}
\label{Sec_ConstraintStrategy}

In this section we introduce \textit{ruling strategies} and \textit{collaborative ruling strategies}. 
We then define \textit{ruling vectors}. 
We prove that ruling strategies are closely associated with the linear space spanned by ruling vectors. 
Based on the theorems and discussions, we provide an algorithm to seek ruling strategies for a single player and collaborative ruling strategies for an alliance.

We first consider linear relations for payoffs.
A linear relation is an equation taking the form:
\begin{equation}
    \alpha_1 \bar{u}_1 + \alpha_2 \bar{u}_2 + \dotsb + \alpha_n \bar{u}_n + \gamma = 0.
\end{equation}
We define a linear payoff relation to be \textit{trivial} if it is always satisfied regardless of the strategies used by all the players.
For example, the linear relation \(0\bar{u}_1+0\bar{u}_2=0\), is satisfied in any two-player game. 
Another example is that in a two-player zero-sum game, the linear relation \(\bar{u}_1+\bar{u}_2=0\) is always satisfied. 
Ruling strategy enforces a non-trivial linear relation rule in the game.

\begin{definition}[Ruling strategy] 
    In an $n$-player repeated game, 
    a ruling strategy \(s_k^*\) (used by player $k$) is such a strategy with which regardless of strategies used by the rest, 
      \begin{itemize}
          \item the limit \(\bar{u}_i(\dotsb,s_k^*,\dotsb)\) exists for each player \(i\);
          \item there exist constants \(\alpha_1,\alpha_2,\dotsb,\alpha_n,\gamma\) which is unilaterally decided by player $k$, such that all players' payoffs have the non-trivial linear relation: $\alpha_1 \bar{u}_1 + \alpha_2 \bar{u}_2 + \dotsb + \alpha_n \bar{u}_n + \gamma = 0$.
      \end{itemize}
    \label{Def_Constraintstrategy}
\end{definition}

Intuitively, the player with a ruling strategy \textit{unilaterally} establishes a linear relation rule of payoffs among all players, whatever strategies the rest use.
This linear payoff rule is always satisfied and the control is exerted unilaterally by the focal player who uses a ruling strategy.

Sometimes, alliance rules and individuals do not.
Consider a 3-player one-shot voting game. 
Every player is both a voter and a candidate.
Each of them vote for a player, and the player with the most votes wins. 
If more than two players tie for first, no one wins. 
Each individual alone cannot control the outcome.
However, when two of the players form an alliance, they are able to communicate ahead and decide who is the winner.
If they focus their votes on a particular player, then she is guaranteed to win.
The alliance are able to control the voting game.
The same idea applies in generalized repeated games.
Multiple players can ally and collaboratively make a payoff relation rule, which cannot be made by a single player.

\begin{definition}[Collaborative ruling strategy]
    In an $n$-player repeated game, a collaborative ruling strategy set is such a set of strategies 
    \(\{s_{k_1}^*, s_{k_2}^* , \dotsb, s_{k_q}^*\}\) (used by $k_1, k_2, \cdots, k_q$), with which regardless of the strategies used by the rest, 
        \begin{itemize}
            \item the limit \(\bar{u}_i(\dotsb,\underbrace{s_{k_1}^*,s_{k_2}^*,\dotsb,s_{k_q}^*}_{k_1, k_2, \cdots, k_q\mbox{\footnotesize{'s strategies}}},\dotsb)\) exists for each player \(i\);
            \item there exist constants \(\alpha_1,\alpha_2,\dotsb,\alpha_n,\gamma\) which is unilaterally decided by player $k_1, k_2, \cdots, k_q$, such that all players' payoffs have the non-trivial linear relation: $\alpha_1 \bar{u}_1 + \alpha_2 \bar{u}_2 + \dotsb + \alpha_n \bar{u}_n + \gamma = 0$.
        \end{itemize}
\end{definition}

The payoff relation rule is a result of collaboratively decision making.
Note that even when an individual can never rule over the rest, a group of players can make such a payoff relation rule.
Also, a single strategy in the collaborative set may not be effective. 
It requires that every member of the alliance use the strategy accordingly to enforce the rule.

To control the payoffs, ruling strategies generate ruling vectors.
Ruling vectors are a class of vectors whose inner products with the weighted average of the distribution are always zero.

\begin{definition}[Ruling vector]
    In an $n$-player repeated game, for players \(k_1,k_2,\dotsb,k_q\) with strategies \(s_{k_1}^*,s_{k_2}^*,\dotsb,s_{k_q}^*\), a \textit{ruling vector} \(\tilde{\mathbf{u}}\) is such a vector that is unilaterally decided by \(s_{k_1}^*,s_{k_2}^*,\dotsb,s_{k_q}^*\)
    and that regardless of the strategies used by the rest, 
    the following equation always holds:
        \begin{equation}
            \lim_{t\to\infty}\langle \tilde{\mathbf{u}},
            \bar{\mathbf{v}}(t) \rangle=0.
            \label{Equ_PDvecs}
        \end{equation}
\end{definition}

Ruling vectors follow the superposition principle. 
That is, for two ruling vectors \(\tilde{\mathbf{u}}_1,\tilde{\mathbf{u}}_2\), their linear combination like \(d_1\tilde{\mathbf{u}}_1+d_2\tilde{\mathbf{u}}_2\) is a ruling vector as well. 
Therefore, for strategies \(s_{k_1}^*,s_{k_2}^*,\dotsb,s_{k_q}^*\), the set of all ruling vectors is a linear subspace. 
A basis is sufficient to describe the entire subspace. We term the subspace \textit{ruling space}.
An intuition for ruling space is that it is the kernel of the weighted average of the distribution \(\bar{\mathbf{v}}\). 
Note that ruling vectors, ruling spaces and their basis are determined by strategies \(s_{k_1}^*,s_{k_2}^*,\dotsb,s_{k_q}^*\) and can change as these strategies vary. 
The following theorems reveal that the existence of ruling vectors enables a linear payoff rule.

\begin{theorem}[Existence of ruling vectors]
    If \(s_{k_1}^*,s_{k_2}^*,\dotsb,s_{k_q}^*\) is a ruling strategy (are collaborative ruling strategies) and 
    \(\alpha_1,\alpha_2,\dotsb,\alpha_n,\gamma\) are the constants in the linear payoff relation, 
    vector \(\tilde{\mathbf{u}}=\alpha_1\mathbf{u}_1+\alpha_2\mathbf{u}_2+\dotsb+\alpha_n\mathbf{u}_n+\gamma\mathbf{1}\) is a ruling vector. \(\mathbf{1}\) is a $\Pi_{i=1}^n m_i$-entry vector with all entries being \(1\).
    \label{Thm_CStoPDvec}
\end{theorem}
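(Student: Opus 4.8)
The plan is to unpack the two definitions and show that the ruling vector condition \eqref{Equ_PDvecs} is just the linear payoff relation rewritten in terms of the distributions $\bar{\mathbf v}(t)$. First I would use the hypothesis that $s_{k_1}^*,\dotsb,s_{k_q}^*$ is a ruling strategy (or collaborative set): this guarantees that, no matter what the remaining players do, each limit $\bar u_i = \lim_{t\to\infty}\langle \mathbf u_i,\bar{\mathbf v}(t)\rangle$ exists, and that $\alpha_1\bar u_1 + \dotsb + \alpha_n\bar u_n + \gamma = 0$ holds. These $\alpha_i$ and $\gamma$ are, by Definition~\ref{Def_Constraintstrategy}, chosen unilaterally by the rulers, so the candidate vector $\tilde{\mathbf u} = \alpha_1\mathbf u_1 + \dotsb + \alpha_n\mathbf u_n + \gamma\mathbf 1$ is likewise unilaterally determined by $s_{k_1}^*,\dotsb,s_{k_q}^*$ — it depends only on the fixed payoff vectors and the rulers' chosen constants, not on the outsiders' strategies. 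That settles the "unilaterally decided" clause.

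Next I would verify the inner-product limit. The key elementary fact is that $\langle \mathbf 1,\bar{\mathbf v}(t)\rangle = 1$ for every $t$, since $\bar{\mathbf v}(t)$ is a convex combination of the probability distributions $\mathbf v^\tau$, each of which sums to $1$ over the action-profile set $A$. Using linearity of the inner product in its first argument,
\begin{equation}
    \langle \tilde{\mathbf u}, \bar{\mathbf v}(t)\rangle
    = \sum_{i=1}^{n}\alpha_i\langle \mathbf u_i,\bar{\mathbf v}(t)\rangle + \gamma\langle \mathbf 1,\bar{\mathbf v}(t)\rangle
    = \sum_{i=1}^{n}\alpha_i\langle \mathbf u_i,\bar{\mathbf v}(t)\rangle + \gamma .
\end{equation}
Taking $t\to\infty$ and invoking the existence of each $\bar u_i$, the right-hand side converges to $\alpha_1\bar u_1 + \dotsb + \alpha_n\bar u_n + \gamma$, which is $0$ by the ruling-strategy property. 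Hence $\lim_{t\to\infty}\langle \tilde{\mathbf u},\bar{\mathbf v}(t)\rangle = 0$, and since this holds against every choice of the outsiders' strategies, $\tilde{\mathbf u}$ satisfies the defining condition of a ruling vector.

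The argument is essentially a one-line computation once the setup is in place, so there is no real obstacle; the only point deserving care is making the two quantifiers match. The ruling-strategy definition asserts the payoff relation holds "regardless of strategies used by the rest," and the ruling-vector definition demands \eqref{Equ_PDvecs} hold "regardless of the strategies used by the rest" — so I would be explicit that both the convergence of $\langle\mathbf u_i,\bar{\mathbf v}(t)\rangle$ and the vanishing of the limit are inherited from the corresponding clauses of Definition~\ref{Def_Constraintstrategy} (respectively its collaborative analogue) uniformly over all outsider profiles, which is exactly what lets us conclude $\tilde{\mathbf u}$ is a ruling vector rather than merely a vector that works for one fixed opponent profile.
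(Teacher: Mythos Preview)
Your proposal is correct and follows exactly the paper's approach: rewrite the linear payoff relation as the inner product $\lim_{t\to\infty}\langle \alpha_1\mathbf u_1+\dotsb+\alpha_n\mathbf u_n+\gamma\mathbf 1,\bar{\mathbf v}(t)\rangle=0$ via linearity and $\langle\mathbf 1,\bar{\mathbf v}(t)\rangle=1$. If anything, you are more careful than the paper, which states this equivalence in a single displayed line without spelling out the $\langle\mathbf 1,\bar{\mathbf v}(t)\rangle=1$ step or the quantifier matching.
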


This theorem holds because the equation of the linear payoff relation can be written as an inner product between linear combination and the weighted average of distribution:
    \begin{equation}
        \begin{split}
            &\alpha_1 \bar{u}_1 + \alpha_2 \bar{u}_2 + \dotsb + \alpha_n \bar{u}_n + \gamma = 0 \\
            \Leftrightarrow &\lim_{t\to \infty}\langle \alpha_1\mathbf{u}_1+\alpha_2\mathbf{u}_2+\dotsb+\alpha_n\mathbf{u}_n+\gamma\mathbf{1}, \bar{\mathbf{v}}(t) \rangle=0.\\
        \end{split}
    \end{equation}
\begin{theorem}[Equivalent condition for ruling strategy]
    Strategy(ies) \(s_{k_1}^*,s_{k_2}^*,\dotsb,s_{k_q}^*\) is a ruling strategy (are collaborative ruling strategies), if and only if,
        \begin{equation}
            \mbox{span}\{\mathbf{u}_1,\mathbf{u}_2,\dotsb,\mathbf{u}_n,\mathbf{1}\}\cap \mbox{span}\{\tilde{\mathbf{u}}_1,\dotsb,\tilde{\mathbf{u}}_r\}\neq\{\mathbf{0}\},
        \end{equation}
    in which \(\{\tilde{\mathbf{u}}_1,\dotsb,\tilde{\mathbf{u}}_r\}\) is a basis for the ruling space for respective strategy(s).
    \label{theorem:Judgment_of_CS}
\end{theorem}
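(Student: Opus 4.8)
The plan is to prove the two implications separately, with Theorem~\ref{Thm_CStoPDvec} (and, in essence, its converse reading) serving as the bridge between the payoff relation and the ruling space. Throughout, the relevant object is the vector
\[
  \tilde{\mathbf{u}}\;=\;\alpha_1\mathbf{u}_1+\alpha_2\mathbf{u}_2+\dotsb+\alpha_n\mathbf{u}_n+\gamma\mathbf{1},
\]
which, by the equivalence displayed right after Theorem~\ref{Thm_CStoPDvec}, turns the linear payoff relation $\alpha_1\bar u_1+\dotsb+\alpha_n\bar u_n+\gamma=0$ into the single scalar condition $\lim_{t\to\infty}\langle\tilde{\mathbf{u}},\bar{\mathbf{v}}(t)\rangle=0$. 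I also record one elementary fact used on both sides: the relation with coefficients $(\alpha_1,\dotsb,\alpha_n,\gamma)$ is trivial if and only if $\tilde{\mathbf{u}}=\mathbf{0}$. If $\tilde{\mathbf{u}}=\mathbf{0}$ the scalar condition holds for every strategy profile, so the relation is trivial; conversely, letting all players use constant pure strategies so that $\bar{\mathbf{v}}(t)$ equals the indicator vector $\mathbf{e}_{\mathbf{a}}$ of a fixed profile $\mathbf{a}$ for every $t$, triviality forces $\langle\tilde{\mathbf{u}},\mathbf{e}_{\mathbf{a}}\rangle=0$ for all $\mathbf{a}\in A$, hence $\tilde{\mathbf{u}}=\mathbf{0}$ since those indicators span $\mathbb{R}^{A}$.

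\emph{Necessity.} Assume $s_{k_1}^*,\dotsb,s_{k_q}^*$ is a (collaborative) ruling strategy. By Definition~\ref{Def_Constraintstrategy} there are constants $\alpha_1,\dotsb,\alpha_n,\gamma$ for which the non-trivial relation $\sum_i\alpha_i\bar u_i+\gamma=0$ holds regardless of the other players. Theorem~\ref{Thm_CStoPDvec} says the corresponding $\tilde{\mathbf{u}}$ is a ruling vector, hence lies in the ruling space $\mbox{span}\{\tilde{\mathbf{u}}_1,\dotsb,\tilde{\mathbf{u}}_r\}$; by its very form it also lies in $\mbox{span}\{\mathbf{u}_1,\dotsb,\mathbf{u}_n,\mathbf{1}\}$. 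By the elementary fact, non-triviality gives $\tilde{\mathbf{u}}\neq\mathbf{0}$, so the two spans meet outside $\{\mathbf{0}\}$.

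\emph{Sufficiency.} Conversely, pick $\tilde{\mathbf{u}}\neq\mathbf{0}$ in the intersection. Membership in $\mbox{span}\{\mathbf{u}_1,\dotsb,\mathbf{u}_n,\mathbf{1}\}$ lets us write $\tilde{\mathbf{u}}=\sum_i\alpha_i\mathbf{u}_i+\gamma\mathbf{1}$ with coefficients the players $k_1,\dotsb,k_q$ compute unilaterally; membership in the ruling space makes $\tilde{\mathbf{u}}$ a ruling vector, so $\lim_{t\to\infty}\langle\tilde{\mathbf{u}},\bar{\mathbf{v}}(t)\rangle=0$ for every choice of the remaining players' strategies, which by linearity of the inner product is precisely $\sum_i\alpha_i\bar u_i+\gamma=0$; it is non-trivial because $\tilde{\mathbf{u}}\neq\mathbf{0}$. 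Together with existence of the limits (below), this is exactly what Definition~\ref{Def_Constraintstrategy} demands.

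\emph{The main obstacle.} The only non-routine point is verifying the first bullet of Definition~\ref{Def_Constraintstrategy} in the sufficiency direction: convergence of $\langle\tilde{\mathbf{u}},\bar{\mathbf{v}}(t)\rangle$ alone does not force each $\langle\mathbf{u}_i,\bar{\mathbf{v}}(t)\rangle$ to converge, since $\bar{\mathbf{v}}(t)$ could oscillate in directions orthogonal to $\tilde{\mathbf{u}}$. I would close this by invoking the standing framework in which the candidate (collaborative) ruling strategies are those for which $\bar{\mathbf{v}}(t)$ itself converges against every opponent profile -- a property one checks while building the ruling space and which holds automatically for the Markov ruling strategies of Section~\ref{Sec_method}. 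Given $\lim_t\bar{\mathbf{v}}(t)=\bar{\mathbf{v}}$, each $\bar u_i=\langle\mathbf{u}_i,\bar{\mathbf{v}}\rangle$ exists and the argument is complete; absent that assumption, the clean statement is the same equivalence with ``each $\bar u_i$ converges'' adjoined to the span condition, and the proof above applies verbatim.
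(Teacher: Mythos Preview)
Your argument follows the same route as the paper's---use Theorem~\ref{Thm_CStoPDvec} for necessity and read the intersection hypothesis back as a linear payoff relation for sufficiency---but you are considerably more careful. The paper's proof is a two-sentence sketch of the sufficiency direction only: it picks a nonzero vector in the intersection, writes it as $\alpha_1\mathbf{u}_1+\dotsb+\alpha_n\mathbf{u}_n+\gamma\mathbf{1}$, and asserts that the corresponding payoff combination vanishes. It does not argue necessity explicitly, does not justify why a nonzero $\tilde{\mathbf{u}}$ yields a \emph{non-trivial} relation (your indicator-vector argument is a genuine addition here), and does not touch the convergence of each individual $\bar{u}_i$. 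The obstacle you flag---that $\langle\tilde{\mathbf{u}},\bar{\mathbf{v}}(t)\rangle\to 0$ need not force each $\langle\mathbf{u}_i,\bar{\mathbf{v}}(t)\rangle$ to converge---is real and is simply passed over in silence in the paper; your proposed fix (assume $\bar{\mathbf{v}}(t)$ itself converges, as holds for the Markov strategies of Section~\ref{Sec_method}, or else adjoin the existence of each $\bar u_i$ to the hypothesis) is the natural way to close it.
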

\begin{proof}
    If the intersection is not a set with only zero vector, then any non-zero vector in the intersection is both a ruling vector and a linear conbination of payoff vectors:
    \begin{equation}
        \tilde{\mathbf{u}}=\alpha_1 \mathbf{u}_1 + \alpha_2 \mathbf{u}_2 + \dotsb + \alpha_n \mathbf{u}_n + \gamma \mathbf{1}.
    \end{equation}
    Therefore the same linear conbination of payoffs always vanishes.
\end{proof}

Theorem \ref{Thm_CStoPDvec} and \ref{theorem:Judgment_of_CS} reveal a fundamental relation between ruling vectors and a ruling strategy. 
The reason why ruling strategy rules is that its ruling space intersects with the linear span of payoff vectors.
Therefore, each vector in the intersection establishes a linear payoff rule.
This idea is explicitly illustrated in the following corollary.

\begin{corollary}
    \(s_{k_1}^*,s_{k_2}^*,\dotsb,s_{k_q}^*\) is a ruling strategy (are collaborative ruling strategies), if and only if, equation
    \begin{equation}
        \begin{bmatrix}
            \mathbf{u}_1 & \dotsb & \mathbf{u}_n & \mathbf{1}\\
        \end{bmatrix}
        \begin{bmatrix}
            \alpha_1 \\
            \vdots \\
            \alpha_n \\
            \gamma
        \end{bmatrix}
        = \begin{bmatrix}
            \tilde{\mathbf{u}}_1  & \dotsb & \tilde{\mathbf{u}}_r\\
        \end{bmatrix}
        \begin{bmatrix}
            y_1 \\
            \vdots \\
            y_r \\
        \end{bmatrix}
        \label{Eqt_FindCS}
    \end{equation}
    has non-zero solutions.
    \label{corollary:Equation_for_CS}
\end{corollary}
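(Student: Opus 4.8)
The plan is to read Corollary~\ref{corollary:Equation_for_CS} as the matrix transcription of Theorem~\ref{theorem:Judgment_of_CS} and to carry the intersection condition through the two membership relations that define it. Write $V_{\mathrm{pay}}=\mbox{span}\{\mathbf{u}_1,\dots,\mathbf{u}_n,\mathbf{1}\}$ and $V_{\mathrm{rule}}=\mbox{span}\{\tilde{\mathbf{u}}_1,\dots,\tilde{\mathbf{u}}_r\}$. By Theorem~\ref{theorem:Judgment_of_CS} the strategies $s_{k_1}^*,\dots,s_{k_q}^*$ form a (collaborative) ruling strategy exactly when $V_{\mathrm{pay}}\cap V_{\mathrm{rule}}\neq\{\mathbf{0}\}$, so it suffices to show that this intersection contains a nonzero vector if and only if~\eqref{Eqt_FindCS} admits a nonzero solution $(\alpha_1,\dots,\alpha_n,\gamma,y_1,\dots,y_r)$.

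For the ``only if'' direction I would pick $\mathbf{0}\neq\mathbf{w}\in V_{\mathrm{pay}}\cap V_{\mathrm{rule}}$; membership in $V_{\mathrm{pay}}$ supplies coefficients with $\mathbf{w}=\sum_i\alpha_i\mathbf{u}_i+\gamma\mathbf{1}$, and membership in $V_{\mathrm{rule}}$ supplies $\mathbf{w}=\sum_j y_j\tilde{\mathbf{u}}_j$; equating the two representations is exactly the matrix identity~\eqref{Eqt_FindCS}, and the resulting tuple is nonzero because $\sum_j y_j\tilde{\mathbf{u}}_j=\mathbf{w}\neq\mathbf{0}$ already forces some $y_j\neq0$. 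For the ``if'' direction, from any nonzero solution I set $\mathbf{w}:=\sum_i\alpha_i\mathbf{u}_i+\gamma\mathbf{1}=\sum_j y_j\tilde{\mathbf{u}}_j$, which by construction lies in $V_{\mathrm{pay}}\cap V_{\mathrm{rule}}$; invoking Theorem~\ref{Thm_CStoPDvec} and the argument behind Theorem~\ref{theorem:Judgment_of_CS}, this $\mathbf{w}$ is a ruling vector and the same $\alpha_1,\dots,\alpha_n,\gamma$ are the coefficients of the enforced payoff rule. It is also worth noting that~\eqref{Eqt_FindCS} is just the homogeneous system $[\,\mathbf{u}_1\ \cdots\ \mathbf{u}_n\ \mathbf{1}\ \mid\ {-}\tilde{\mathbf{u}}_1\ \cdots\ {-}\tilde{\mathbf{u}}_r\,]\mathbf{z}=\mathbf{0}$ in the unknown $\mathbf{z}=(\alpha_1,\dots,\alpha_n,\gamma,y_1,\dots,y_r)^{\top}$, so ``having a nonzero solution'' is the finite, checkable condition that this $(\prod_{i=1}^{n}m_i)\times(n+1+r)$ matrix is rank-deficient — precisely the form the searching algorithm exploits.

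The one point that needs care — and the only real obstacle — is making sure that the vector $\mathbf{w}$ extracted from a nonzero solution is itself nonzero, since only a nonzero $\mathbf{w}$ certifies $V_{\mathrm{pay}}\cap V_{\mathrm{rule}}\neq\{\mathbf{0}\}$. Here I would use that $\{\tilde{\mathbf{u}}_1,\dots,\tilde{\mathbf{u}}_r\}$ is a basis, hence linearly independent: if $\mathbf{w}=\mathbf{0}$ then all $y_j$ vanish, so $\sum_i\alpha_i\mathbf{u}_i+\gamma\mathbf{1}=\mathbf{0}$ with $(\alpha,\gamma)\neq\mathbf{0}$, which means the linear relation $\sum_i\alpha_i\bar{u}_i+\gamma=0$ is trivial and is ruled out by Definition~\ref{Def_Constraintstrategy}. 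Thus the equivalence is exact once ``nonzero solution'' is understood as one producing a nonzero common vector $\mathbf{w}$ (equivalently, under the mild, generically satisfied hypothesis that $\mathbf{u}_1,\dots,\mathbf{u}_n,\mathbf{1}$ are linearly independent, so that no such degenerate solution exists at all); everything else is routine linear algebra.
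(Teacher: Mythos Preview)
Your proposal is correct and follows exactly the route the paper intends: Corollary~\ref{corollary:Equation_for_CS} is stated in the paper without its own proof, as an immediate matrix transcription of the intersection condition in Theorem~\ref{theorem:Judgment_of_CS}, and your argument unpacks precisely that equivalence. If anything you are more careful than the paper itself, since you explicitly isolate and dispose of the degenerate case where the solution tuple is nonzero but the common vector $\mathbf{w}$ vanishes---a subtlety the paper leaves implicit.
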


Theorem \ref{theorem:Judgment_of_CS} and Corollary \ref{corollary:Equation_for_CS} present the main idea of searching ruling strategies.
To find the collaborative ruling strategies for players \(k_1,k_2,\dotsb,k_q\), the algorithm works as follows: input the basis of ruling space into Eq.\ref{Eqt_FindCS}; solve the equation; 
find feasible strategies that satisfy the solution; end up with the constants \(\alpha_1,\dotsb,\alpha_n,\gamma\) from the solution.
This algorithm also verifies whether a certain linear payoff relation can be enforced by ruling strategies: input the constants in the relation into the equation. If the equation has a solution, the payoff relation is feasible.
However, we still need the expression of ruling vectors to complete the equation. 
We focus on solving this problem in the next section.

\section{MARKOV RULING STRATEGY}
\label{Sec_method}

In this section, we focus on Markov ruling strategies, with which only the interaction in the latest round is used to decide the action in the current round.
We then provide expressions for ruling vectors under two situations, and work out an example of ruling strategies in a two-player three-action game.
Another example is used to discuss the synergy effects when rulers ally.
This section ends up with an existence theorem for ruling vectors.
We prove that ruling vectors exist for strict Markov strategy if and only if either the repeated game proceeds for an infinite number of rounds, or every round is repeated with the same probability.

\begin{definition}[Markov strategy]
    In an $n$-player repeated game, strategy \(s_i\) for player \(i\) is a Markov strategy if actions based on $s_i$ and history \(h^{t}=(\mathbf{a}^1,\dotsb,\mathbf{a}^t)\) with \(t\neq 0\), satisfy
    \begin{equation}
        p_i(a_i^j \mid h^t)=p_i(a_i^j \mid \mathbf{a}^t).
    \end{equation}
\end{definition}

The definition of Markov strategies implies that the player has a one-step memory. Her behavior only depends on the action profile from the last round. Therefore, a Markov strategy can be represented by examining the behavior under length-zero and every length-one history (action profile) as inputs. Let $\Pi_{i=1}^n m_i$-entry tuple \(\mathbf{s}_{a_i^j}\) denote player \(i\)’s conditional probability to choose action \(a_i^j\) under different action profiles. Also, we define \(s_{a_i^j\mid 0}\) as the probability to choose action \(a_i^j\) in round \(1\) (with length-zero histories). We provide an example of Markov strategies in repeated prisoner's dilemma.

\begin{example}[Markov strategy in repeated prisoner's dilemma]
    In the two-player two-action game, a Markov strategy can be described as:
    \begin{equation}
        \begin{split}
            \mathbf{s}_C &= \begin{bmatrix}
                p_{C\mid CC} & p_{C\mid CD} & p_{C\mid DC} & p_{C\mid DD} \\
            \end{bmatrix},\\
            s_{C\mid 0}&=p_{C\mid 0},\\
        \end{split}
        \label{equation:Markov_strategy_described_by_cooperate_probability}
    \end{equation}
    where \(p_{C\mid \mathbf{a}}\) is the conditional probability to \textit{cooperate} in the next round given that the action profile $\mathbf{a}$ in the current round. 
    Analogously, based on the probability to \textit{defect}, a Markov strategy is given by
    \begin{equation}
        \begin{split}
            \mathbf{s}_D &= \begin{bmatrix}
                p_{D\mid CC} & p_{D\mid CD} & p_{D\mid DC} & p_{D\mid DD} \\
            \end{bmatrix},\\
            s_{D\mid 0}&=p_{D\mid 0}.\\
        \end{split}
        \label{equation:Markov_strategy_described_by_defect_probability}
    \end{equation}
    \label{example:Markov_strategy_in_IPD}
\end{example}

The following theorems provide ruling vectors for Markov strategies in any multi-player multi-action game. Whereas Akin's Lemma \cite{Akin2016}, which concentrates on a two-player two-action game.

\begin{theorem}[Ruling vectors for Markov strategy]
    In a generalized game, for any Markov strategy \(s_i\) and action \(a_i^j\), 
    \begin{itemize}
        \item if expected number of rounds is infinite, vector
        \begin{equation}
            \mathbf{s}_{a_i^j}-\mathbf{s}^{Rep}_{a_i^j}
            \label{Eqt_RVforIndividualInfinite}
        \end{equation}
    is a ruling vector;
        \item if \(\Gamma_{GR}\) is a \(\delta\)-repeated game, vector
        \begin{equation}
            \delta\mathbf{s}_{a_i^j}+(1-\delta)s_{a_i^j\mid 0}\mathbf{1}-\mathbf{s}^{Rep}_{a_i^j}
            \label{Eqt_RVforIndividualDelta}
        \end{equation}
    is a ruling vector.
    \end{itemize}
    
    In both cases, \(\mathbf{s}^{Rep}_{a_i^j}\) is an indicator vector with $\Pi_{i=1}^n m_i$ entries. Each entry corresponds to an action profile. The entry is $1$ when player $i$ uses action $a_i^j$ in the corresponding action profile. Otherwise the entry is $0$.
    \label{theorem:PDv_for_M-1strategy}
\end{theorem}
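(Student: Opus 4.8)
The plan is to track how the weighted-average distribution $\bar{\mathbf{v}}(t)$ evolves from round to round, and to extract the claimed identity $\lim_{t\to\infty}\langle\tilde{\mathbf{u}},\bar{\mathbf{v}}(t)\rangle=0$ from the Markov update rule. The key observation is that because $s_i$ is a Markov strategy, the probability that player $i$ plays $a_i^j$ in round $t+1$ depends only on the action profile $\mathbf{a}^t$ in round $t$. Concretely, for $t\ge 1$,
\begin{equation}
    \langle \mathbf{s}_{a_i^j}, \mathbf{v}^t \rangle = \Pr[\text{player }i\text{ plays }a_i^j\text{ in round }t+1] = \langle \mathbf{s}^{Rep}_{a_i^j}, \mathbf{v}^{t+1} \rangle,
\end{equation}
where the right-hand side holds because $\mathbf{s}^{Rep}_{a_i^j}$ is exactly the indicator of the event ``$i$ plays $a_i^j$'' read off the current-round distribution. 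This single telescoping-type identity is the crux; everything else is bookkeeping with the weights $p(\tau)$. Note it holds \emph{regardless of the other players' strategies}, since $i$'s conditional behaviour is fixed by $s_i$ alone — this is what makes the resulting vector a ruling vector in the sense of the definition.

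\textbf{The infinite-rounds case.} Here $p(t)\equiv 1$ (or more generally $\sum_{\tau=1}^t p(\tau)\to\infty$ with $p(\tau)$ bounded; for $c\equiv 1$ it is simply $p(\tau)=1$). I would write
\begin{equation}
    \Big\langle \mathbf{s}_{a_i^j}-\mathbf{s}^{Rep}_{a_i^j},\ \sum_{\tau=1}^t p(\tau)\mathbf{v}^\tau \Big\rangle = \sum_{\tau=1}^t p(\tau)\langle \mathbf{s}_{a_i^j},\mathbf{v}^\tau\rangle - \sum_{\tau=1}^t p(\tau)\langle \mathbf{s}^{Rep}_{a_i^j},\mathbf{v}^\tau\rangle,
\end{equation}
then substitute $\langle\mathbf{s}_{a_i^j},\mathbf{v}^\tau\rangle=\langle\mathbf{s}^{Rep}_{a_i^j},\mathbf{v}^{\tau+1}\rangle$ in the first sum and re-index. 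The two sums telescope up to boundary terms $\langle\mathbf{s}^{Rep}_{a_i^j},\mathbf{v}^1\rangle$ and $\langle\mathbf{s}^{Rep}_{a_i^j},\mathbf{v}^{t+1}\rangle$, each of which is bounded in $[0,1]$. Dividing by $\sum_{\tau=1}^t p(\tau)\to\infty$ kills the boundary contribution, giving $\lim_{t\to\infty}\langle\mathbf{s}_{a_i^j}-\mathbf{s}^{Rep}_{a_i^j},\bar{\mathbf{v}}(t)\rangle=0$, which is Eq.~\eqref{Equ_PDvecs}. One subtlety worth stating explicitly: the limit defining $\bar{u}_i$ must actually exist, but the argument above shows the relevant inner product converges to $0$ directly, so this is automatic for the ruling vector itself.

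\textbf{The $\delta$-repeated case.} Now $p(\tau)=\delta^{\tau-1}$ and $\sum_{\tau=1}^t p(\tau)\to \frac{1}{1-\delta}$, which is \emph{finite} — so the telescoping trick must be re-balanced with the right coefficients rather than relying on a divergent denominator. I would form
\begin{equation}
    \Big\langle \delta\mathbf{s}_{a_i^j}+(1-\delta)s_{a_i^j\mid 0}\mathbf{1}-\mathbf{s}^{Rep}_{a_i^j},\ \sum_{\tau=1}^t p(\tau)\mathbf{v}^\tau \Big\rangle,
\end{equation}
and use $p(\tau)\delta=\delta^\tau=p(\tau+1)$ together with $\langle\mathbf{s}_{a_i^j},\mathbf{v}^\tau\rangle=\langle\mathbf{s}^{Rep}_{a_i^j},\mathbf{v}^{\tau+1}\rangle$, so that $\sum_\tau p(\tau)\delta\langle\mathbf{s}_{a_i^j},\mathbf{v}^\tau\rangle = \sum_\tau p(\tau+1)\langle\mathbf{s}^{Rep}_{a_i^j},\mathbf{v}^{\tau+1}\rangle$, matching the $-\mathbf{s}^{Rep}_{a_i^j}$ term for all indices $\tau\ge 2$. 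The leftover is the $\tau=1$ term $-p(1)\langle\mathbf{s}^{Rep}_{a_i^j},\mathbf{v}^1\rangle = -\langle\mathbf{s}^{Rep}_{a_i^j},\mathbf{v}^1\rangle$, and here the $(1-\delta)s_{a_i^j\mid 0}\mathbf{1}$ term is designed to cancel it: $(1-\delta)s_{a_i^j\mid 0}\sum_{\tau=1}^t p(\tau) = (1-\delta)s_{a_i^j\mid 0}\cdot\frac{1-\delta^t}{1-\delta} \to s_{a_i^j\mid 0} = \langle\mathbf{s}^{Rep}_{a_i^j},\mathbf{v}^1\rangle$, the last equality because $\mathbf{v}^1$ is determined by the length-zero initial move. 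A vanishing boundary term $\delta^t\langle\mathbf{s}^{Rep}_{a_i^j},\mathbf{v}^{t+1}\rangle$ also appears and goes to $0$. I would present the $\delta$ case carefully as a single displayed computation; the infinite case is then either a special limiting remark or handled by the analogous (simpler) telescoping.

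\textbf{Main obstacle.} The conceptual content is the one-line Markov identity $\langle\mathbf{s}_{a_i^j},\mathbf{v}^\tau\rangle=\langle\mathbf{s}^{Rep}_{a_i^j},\mathbf{v}^{\tau+1}\rangle$ and the claim that it is independent of the opponents' play; justifying this cleanly (in terms of the step-by-step construction of $\mathbf{v}^{\tau+1}$ from $\mathbf{v}^\tau$ and the players' conditional action probabilities, factoring out player $i$'s marginal) is where care is needed. The rest is choosing the coefficients $\delta$ and $(1-\delta)s_{a_i^j\mid0}$ so the telescoped sum closes exactly, and checking that boundary terms either vanish (divergent-weight case) or are cancelled by the constant-vector term (finite-weight $\delta$ case). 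I expect no genuine difficulty beyond getting the re-indexing and the $\tau=1$ boundary term right.
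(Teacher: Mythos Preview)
Your proposal is correct and follows essentially the same route as the paper: both hinge on the one-line Markov identity $\langle\mathbf{s}_{a_i^j},\mathbf{v}^\tau\rangle=\langle\mathbf{s}^{Rep}_{a_i^j},\mathbf{v}^{\tau+1}\rangle$ and then telescope the weighted sum, letting a divergent denominator kill boundary terms in the infinite case and using the $(1-\delta)s_{a_i^j\mid 0}$ correction to close the sum in the $\delta$-case. One small point to tighten: in the general infinite-expected-rounds case (not just $p(\tau)\equiv 1$), the telescoping leaves not only the two boundary terms you name but also the cross terms $\sum_{\tau=2}^t (p(\tau-1)-p(\tau))\langle\mathbf{s}^{Rep}_{a_i^j},\mathbf{v}^\tau\rangle$; these are nonnegative and bounded by $p(1)-p(t)\le 1$, so the conclusion is unaffected, and the paper handles exactly this via a sandwich bound.
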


\begin{proof}
    The probability player \(i\) chooses action \(a_i^j\) in round \(t+1\) can be calculated from the distribution of round \(t\) and player \(i\)'s strategy \(\mathbf{s}_{a^j_i}\), and also from the distribution of round \(t+1\) and repeat strategy \(\mathbf{s}_{a^j_i}^{Rep}\).
    \begin{equation}
        \mbox{Prob.}(\mbox{``\(a_i^j\) in round \(t+1\)''})=  \langle \mathbf{s}_{a^j_i}^{Rep},\mathbf{v}^{t+1} \rangle = \langle \mathbf{s}_{a^j_i},\mathbf{v}^{t} \rangle.
    \end{equation}
For convenience we denote this probability by \(\mbox{P}_{a_i^j}^t\) and \(p(t)\) by \(p_t\).
Therefore, for games with infinite expected number of rounds, we have:
    \begin{equation}
        \begin{split}
            &\lim_{t\to\infty}\langle \mathbf{s}_{a^j_i}^{Rep}-\mathbf{s}_{a^j_i},
            \bar{\mathbf{v}}(t) \rangle\\
            =&\lim_{t\to\infty}\frac{p_t\mbox{P}_{a_i^j}^t+(p_{t-1}-p_t)\mbox{P}_{a_i^j}^t+\dotsb+(p_1-p_2)\mbox{P}_{a_i^j}^2-p_1\mbox{P}_{a_i^j}^1}{p_1 + p_2 + \dotsb + p_t}\\
            \leq&\lim_{t\to\infty}\frac{p_t\mbox{P}_{a_i^j}^t-p_t+p_1-p_1 \mbox{P}_{a_i^j}^1}{p_1 + p_2 + \dotsb + p_t}\\
            =&\;0.\\
        \end{split}
    \end{equation}
This is a use of sandwich theorem, and the inequality always holds because \(0\leq p_{t-1}-p_t\leq 1\) and \(0 \leq \mbox{P}_{a_i^j}^t \leq 1\).

For \(\delta\)-repeated games, similarly, we have:
    \begin{equation}
        \begin{split}
            &\lim_{t\to\infty}\langle \mathbf{s}^{Rep}_{a_i^j}-\delta\mathbf{s}_{a_i^j}-(1-\delta)s_{a_i^j\mid 0}\mathbf{1}, \bar{\mathbf{v}}(t) \rangle \\
            =&\lim_{t\to\infty} \frac{\mbox{P}_{a_i^j}^1-\delta^t \mbox{P}_{a_i^j}^t}{(1-\delta^t)/(1-\delta)}-(1-\delta)s_{a_i^j\mid 0}\\
            =&\lim_{t\to\infty} \frac{s_{a_i^j\mid 0}-\delta^t \mbox{P}_{a_i^j}^t}{(1-\delta^t)/(1-\delta)}-(1-\delta)s_{a_i^j\mid 0}\\
            =&\;0.\\
        \end{split}
    \end{equation}
The limit approaches 0 since \(\delta^t\to 0\). 
\end{proof}

\(\mathbf{s}^{Rep}_{a_i^j}\) can be viewed as a Markov strategy that \textit{repeats} whatever actions chosen in the previous round. Therefore, the conditional probability of action $a_i^j$ is $1$ when action profiles in the previous round contains the same action, and $0$ otherwise.

\begin{theorem}[Ruling vectors for Markov strategy set]
    In a generalized game, for every Markov strategy set \(\{s_{k_1},s_{k_2},\dotsb,s_{k_q}\}\), 
        \begin{itemize}
            \item if the expected rounds of games is infinite, vector
            \begin{equation}
                \mathbf{s}_{a_{k_1}^{j_1}\dotsb a_{k_q}^{j_q}}-\mathbf{s}^{Rep}_{a_{k_1}^{j_1}\dotsb a_{k_q}^{j_q}}
            \end{equation}
            is a ruling vector;
            \item if \(\Gamma_{GR}\) is a \(\delta\)-repeated game, vector
            \begin{equation}
                \delta \mathbf{s}_{a_{k_1}^{j_1}\dotsb a_{k_q}^{j_q}} + (1-\delta)s_{a_{k_1}^{j_1}\mid 0}\dotsb s_{a_{k_q}^{j_q}\mid 0}\mathbf{1}-\mathbf{s}^{Rep}_{a_{k_1}^{j_1}\dotsb a_{k_q}^{j_q}}
            \end{equation} 
            is a ruling vector.
        \end{itemize}
    In both cases, \(\mathbf{s}^{Rep}_{a_i^j}\) is an indicator vector with $\Pi_{i=1}^n m_i$ entries. Each entry corresponds to an action profile. The entry is $1$ when player $k_1,\dotsb,k_q$ uses action $a_{k_1}^{j_1},\dotsb,a_{k_q}^{j_q}$ respectively in the corresponding action profile. Otherwise the entry is $0$.
    And \(\mathbf{s}_{a_{k_1}^{j_1}\dotsb a_{k_q}^{j_q}}\) is also a $\Pi_{i=1}^n m_i$-entry vector with each entry corresponding to an action profile. The entry is the joint probability that actions $a_{k_1}^{j_1},\dotsb,a_{k_q}^{j_q}$ appear in the next round given the action profile previous round is \(\mathbf{a}\), namely
        \begin{equation}
            \mbox{Prob.}(a_{k_1}^{j_1},\dotsb,a_{k_q}^{j_q}\mid \mathbf{a})=p_{k_1}(a_{k_1}^{j_1}\mid \mathbf{a})\dotsb p_{k_q}(a_{k_q}^{j_q}\mid \mathbf{a}).
        \end{equation}
    \label{theorem:PDv_for_M-1Alliance}
\end{theorem}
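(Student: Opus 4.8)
The plan is to mirror the proof of Theorem~\ref{theorem:PDv_for_M-1strategy} almost verbatim, replacing the single action $a_i^j$ of one player by the joint event that the alliance members $k_1,\dots,k_q$ play $a_{k_1}^{j_1},\dots,a_{k_q}^{j_q}$ simultaneously. The only genuinely new ingredient is a factorization (independence) identity; once it is in hand, both limit computations are line-by-line copies of the single-player ones.

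First I would record the key identity. Write $\mathrm{P}^{t}$ for the probability that $a_{k_1}^{j_1},\dots,a_{k_q}^{j_q}$ all occur in round $t$. The claim is that for every $t$,
\[
\mathrm{P}^{t+1}=\langle \mathbf{s}^{Rep}_{a_{k_1}^{j_1}\dots a_{k_q}^{j_q}},\mathbf{v}^{t+1}\rangle=\langle \mathbf{s}_{a_{k_1}^{j_1}\dots a_{k_q}^{j_q}},\mathbf{v}^{t}\rangle .
\]
The first equality is just the definition of the indicator vector $\mathbf{s}^{Rep}$: its inner product with $\mathbf{v}^{t+1}$ sums $v^{t+1}(\mathbf a)$ over exactly the profiles $\mathbf a$ in which $k_1,\dots,k_q$ use the prescribed actions. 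The second equality is where independence enters: conditioning on the round-$t$ profile $\mathbf a^{t}=\mathbf a$, each member $k_\ell$ draws her round-$(t+1)$ action from $s_{k_\ell}(\mathbf a)$ independently of the other members (players randomize independently), so $\mathrm{Prob.}(a_{k_1}^{j_1},\dots,a_{k_q}^{j_q}\mid \mathbf a)=\prod_{\ell=1}^{q}p_{k_\ell}(a_{k_\ell}^{j_\ell}\mid \mathbf a)$, which is precisely the $\mathbf a$-entry of $\mathbf{s}_{a_{k_1}^{j_1}\dots a_{k_q}^{j_q}}$; averaging over $\mathbf a^{t}\sim\mathbf v^{t}$ then yields the stated inner product. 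I would stress that we never need to know how the non-members play or how $\mathbf v^{t+1}$ arises, only that the members are Markov, so the identity holds regardless of outsiders' strategies — exactly what a ruling vector requires.

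With this identity the remainder is a transcription. Put $p_{t}:=p(t)$, and use $\langle \mathbf s^{Rep},\mathbf v^{\tau}\rangle=\mathrm P^{\tau}$, $\langle \mathbf s,\mathbf v^{\tau}\rangle=\mathrm P^{\tau+1}$, $\langle\mathbf 1,\bar{\mathbf v}(t)\rangle=1$. For the infinite-expected-rounds case, expand $\langle \mathbf{s}^{Rep}_{a_{k_1}^{j_1}\dots a_{k_q}^{j_q}}-\mathbf{s}_{a_{k_1}^{j_1}\dots a_{k_q}^{j_q}},\bar{\mathbf v}(t)\rangle$, rearrange the numerator into the telescoping form carrying the non-negative increments $p_{\tau-1}-p_{\tau}$, and sandwich it between $0$ and a quantity of the shape $(p_{t}\mathrm P^{t+1}-p_{t}+p_{1}-p_{1}\mathrm P^{1})/(p_{1}+\dots+p_{t})$; since $0\le p_{\tau-1}-p_{\tau}\le 1$, $0\le \mathrm P^{\tau}\le 1$, and $p_{1}+\dots+p_{t}\to\infty$, the squeeze forces the limit to $0$, so the vector is a ruling vector. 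For the $\delta$-repeated case, substitute $p_{t}=\delta^{t-1}$, so $p_{1}+\dots+p_{t}=(1-\delta^{t})/(1-\delta)$, and use that the round-one joint probability of the tuple is $w_{0}:=s_{a_{k_1}^{j_1}\mid 0}\dots s_{a_{k_q}^{j_q}\mid 0}$ (again by independence of the members' length-zero choices), obtaining
\[
\lim_{t\to\infty}\big\langle \mathbf{s}^{Rep}-\delta\,\mathbf{s}-(1-\delta)w_{0}\mathbf 1,\ \bar{\mathbf v}(t)\big\rangle
=\lim_{t\to\infty}\frac{w_{0}-\delta^{t}\mathrm P^{t}}{(1-\delta^{t})/(1-\delta)}-(1-\delta)w_{0}=0,
\]
since $\delta^{t}\to 0$, exactly as in the single-player proof.

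The main obstacle is conceptual rather than computational: cleanly justifying the factorization $\mathrm{Prob.}(a_{k_1}^{j_1},\dots,a_{k_q}^{j_q}\mid \mathbf a)=\prod_{\ell}p_{k_\ell}(a_{k_\ell}^{j_\ell}\mid \mathbf a)$, and, relatedly, checking that $\mathbf{s}^{Rep}_{a_{k_1}^{j_1}\dots a_{k_q}^{j_q}}$ as defined really does pick out this joint event under $\mathbf v^{t+1}$. Everything downstream is the proof of Theorem~\ref{theorem:PDv_for_M-1strategy} with $a_i^j$ replaced by the tuple $(a_{k_1}^{j_1},\dots,a_{k_q}^{j_q})$, so no new estimates are needed.
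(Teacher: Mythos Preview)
Your proposal is correct and follows essentially the same approach as the paper's own proof: establish the key identity $\mathrm{P}^{t+1}=\langle \mathbf{s}^{Rep}_{a_{k_1}^{j_1}\dots a_{k_q}^{j_q}},\mathbf{v}^{t+1}\rangle=\langle \mathbf{s}_{a_{k_1}^{j_1}\dots a_{k_q}^{j_q}},\mathbf{v}^{t}\rangle$ for the alliance's joint action (via the independence factorization you highlight), and then carry over the limit computations from Theorem~\ref{theorem:PDv_for_M-1strategy} verbatim. In fact you supply more detail than the paper, which simply states this identity and then says ``the remainder is similar to the proof of Theorem~\ref{theorem:PDv_for_M-1strategy}.''
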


\begin{proof}
    The method we used in the proof of Theorem \ref{theorem:PDv_for_M-1strategy} can be generalized to prove Theorem \ref{theorem:PDv_for_M-1Alliance}. For alliances, the joint probability that actions \(a_{k_1}^{j_1},\dotsb,a_{k_q}^{j_q}\) appear simultaneously in round $t$ satisfies:
    \begin{equation}
        \mbox{Prob.}(\mbox{``\(a_{k_1}^{j_1},\dotsb,a_{k_q}^{j_q}\)''})
        = \langle \mathbf{s}^{Rep}_{a_{k_1}^{j_1}\dotsb a_{k_q}^{j_q}},\mathbf{v}^{t+1} \rangle
        = \langle \mathbf{s}_{a_{k_q}^{j_q}\dotsb a_{k_q}^{j_q}},\mathbf{v}^t \rangle.
    \end{equation}
    The remainder is similar to the proof of Theorem \ref{theorem:PDv_for_M-1strategy}.
\end{proof}

Theorem \ref{theorem:PDv_for_M-1strategy} and \ref{theorem:PDv_for_M-1Alliance} enable us to search for ruling strategies and collaborative ruling strategies in the space of Markov strategies. For an \(n\)-player \(m\)-action game, Theorem \ref{theorem:PDv_for_M-1strategy} provides \(m\) ruling vectors, with \(m-1\) vectors being linearly independent, since the sum of all \(m\) ruling vectors equals zero. Analogously, if \(k\) players form an alliance, Theorem \ref{theorem:PDv_for_M-1Alliance} provides \(m^k\) ruling vectors, with \(m^k-1\) vectors being linearly independent. This indicates that working collectively expands exponentially the dimension of the ruling space, thus more linear relation rules are feasible.

The following examples illustrate how ruling strategies rule in different games. 
Example \ref{Emp_TwoPlayerThreeAction} demonstrates the algorithm in a two-player three-action infinitely repeated game. 
We display some unique ruling strategies, including a strategy that fixes the opponent's payoff and a strategy that guarantees equal payoffs.
The performances of these strategies, compared with the performance of a normal Markov strategy, are simulated and the results are shown in Figure \ref{Fig_TwoPlayerThreeAction}.

Example \ref{Exp_AllianceRules_IndividualsDont} focuses on the synergy effect of collaboration on a three-player two-action donor's game. 
By allying rulers are able to shelter their payoffs from the outsider's interruption, or they can reach out and control the outsider's payoff.
The results are shown in Figure \ref{Fig_AllianceRules}.

\begin{example}[Ruling strategy]
    Consider a two-player infinitely repeated donor's game. In each round, each player has three actions: (i) action \(C_1\): donates \(2\), for the opponent to obtain \(5\); (ii) action \(C_2\): donates \(1\), for the opponent to obtain \(3\); (iii) action \(D\) : donates nothing. There will be \(9\) feasible action profiles. The action profiles and corresponding payoffs under each profiles are given by:
        \begin{equation}
            \footnotesize{
            \begin{blockarray}{ccc}
                \mathbf{a} & \mathbf{u}_1 & \mathbf{u}_2\\
                \begin{block}{c[c][c]}
                    C_1C_1 & 3 & 3 \\
                    C_1C_2 & 1 & 4 \\
                    C_1D & -2 & 5 \\
                    C_2C_1 & 4 & 1 \\
                    C_2C_2 & 2 & 2 \\
                    C_2D & -1 & 3 \\
                    DC_1 & 5 & -2 \\
                    DC_2 & 3 & -1 \\
                    DD & 0 & 0 \\
                \end{block}
            \end{blockarray}
            }.
        \end{equation}
        Suppose that player \(1\) wants to control player \(2\)'s payoff. 
        It is an infinitely repeated game, therefore she first obtains her ruling vectors according to Eq.(\ref{Eqt_RVforIndividualInfinite}) in Theorem \ref{theorem:PDv_for_M-1strategy}:
            \begin{equation}
                \footnotesize{
                    \begin{blockarray}{ccc}
                        \tilde{\mathbf{u}}_1 & \tilde{\mathbf{u}}_2 & \tilde{\mathbf{u}}_3\\
                        \begin{block}{[ccc]}
                        p_{C_1\mid C_1C_1}-1 & p_{C_2\mid C_1C_1} & p_{D\mid C_1C_1}\\
                        p_{C_1\mid C_1C_2}-1 & p_{C_2\mid C_1C_2} & p_{D\mid C_1C_2}\\
                        p_{C_1\mid C_1D}-1 & p_{C_2\mid C_1D} & p_{D\mid C_1D}\\
                        p_{C_1\mid C_2C_1} & p_{C_2\mid C_2C_1}-1 & p_{D\mid C_2C_1}\\
                        p_{C_1\mid C_2C_2} & p_{C_2\mid C_2C_2}-1 & p_{D\mid C_2C_2}\\
                        p_{C_1\mid C_2D} & p_{C_2\mid C_2D}-1 & p_{D\mid C_2D}\\
                        p_{C_1\mid DC_1} & p_{C_2\mid DC_1}  & p_{D\mid DC_1}-1\\
                        p_{C_1\mid DC_2} & p_{C_2\mid DC_2} & p_{D\mid DC_2}-1\\
                        p_{C_1\mid DD} & p_{C_2\mid DD} & p_{D\mid DD}-1\\
                        \end{block}
                    \end{blockarray}
                }.
            \end{equation}
    According to Eq.(\ref{Eqt_FindCS}), we have:
        \begin{equation}
            \footnotesize{
            \begin{blockarray}{cccc}
                \mathbf{a} & \mathbf{u}_1 & \mathbf{u}_2 & \mathbf{1}\\
                \begin{block}{c[ccc]}
                    C_1C_1 & 3 & 3 & 1\\
                    C_1C_2 & 1 & 4 & 1\\
                    C_1D & -2 & 5 & 1\\
                    C_2C_1 & 4 & 1 & 1\\
                    C_2C_2 & 2 & 2 & 1\\
                    C_2D & -1 & 3 & 1\\
                    DC_1 & 5 & -2 & 1\\
                    DC_2 & 3 & -1 & 1\\
                    DD & 0 & 0 & 1\\
                \end{block}
            \end{blockarray}
            \begin{bmatrix}
                \alpha_1\\
                \alpha_2\\
                \gamma\\
            \end{bmatrix}
            =
            \begin{bmatrix}
                \tilde{\mathbf{u}}_1 & \tilde{\mathbf{u}}_2
            \end{bmatrix}
            \begin{bmatrix}
                y_1\\
                y_2\\
            \end{bmatrix}
            }.
        \end{equation}
    Since we have \(p_{C_1\mid \cdot}+p_{C_2\mid \cdot}+p_{C_D\mid \cdot}=1\), these three vectors satisfy \(\tilde{\mathbf{u}}_1+\tilde{\mathbf{u}}_2+\tilde{\mathbf{u}}_3=0\). Therefore it's unneccessary to write \(\tilde{\mathbf{u}}_3\) in the equation, as it is linearly dependent on \(\tilde{\mathbf{u}}_1,\tilde{\mathbf{u}}_2\). A solution of the equation is:
        \begin{equation}
            \footnotesize{
            \begin{bmatrix}
                \alpha_1\\
                \alpha_2\\
                \gamma\\
            \end{bmatrix}
            =
            \begin{bmatrix}
                0\\
                1\\
                -2\\
            \end{bmatrix},
            \begin{bmatrix}
                y_1\\
                y_2\\
            \end{bmatrix}
            =
            \begin{bmatrix}
                -5\\
                -2.5\\
            \end{bmatrix},
            \begin{bmatrix}
                \mathbf{s}_{C_1} & \mathbf{s}_{C_2}
            \end{bmatrix}
            =
            \begin{bmatrix}
               0.7 & 0.2\\
               0.4 & 0.4\\
               0.1 & 0.6\\
               0.6 & 0.2\\
               0.4 & 0.2\\
               0.2 & 0.2\\
               0.8 & 0\\
               0.5 & 0.2\\
               0.3 & 0.2\\
            \end{bmatrix}
            },
            \label{Eqt_FixPayoffStrategy}
        \end{equation}
    which indicates that player \(2\)'s payoff will be fixed to \(2\) (\(\bar{u}_2-2=0\)), provided she uses the strategy in Eq.(\ref{Eqt_FixPayoffStrategy}). It is enough to describe a Markov strategy by two vectors \(\mathbf{s}_{C_1},\mathbf{s}_{C_2}\), because the third vector \(\mathbf{s}_D\) is given by \(\mathbf{1}-\mathbf{s}_{C_1}-\mathbf{s}_{C_2}\). By using this strategy, player \(1\) unilaterally pins her opponent's payoff.

    Another solution is:
        \begin{equation}
            \footnotesize{
            \begin{bmatrix}
                \alpha_1\\
                \alpha_2\\
                \gamma\\
            \end{bmatrix}
            =
            \begin{bmatrix}
                1\\
                -1\\
                0\\
            \end{bmatrix},
            \begin{bmatrix}
                y_1\\
                y_2\\
            \end{bmatrix}
            =
            \begin{bmatrix}
                10\\
                5\\
            \end{bmatrix},
            \begin{bmatrix}
                \mathbf{s}_{C_1} & \mathbf{s}_{C_2}
            \end{bmatrix}
            =
            \begin{bmatrix}
               1 & 0\\
               0.5 & 0.4\\
               0.2 & 0.2\\
               0.7 & 0.2\\
               0 & 1\\
               0.1 & 0\\
               0.6 & 0.2\\
               0.3 & 0.2\\
               0 & 0\\
            \end{bmatrix}
            },
            \label{Eqt_FollowPayoffStrategy}
        \end{equation}
    which indicates that when she uses the strategy in Eq.(\ref{Eqt_FollowPayoffStrategy}), player \(1\)'s payoff will always be the same as her opponent's payoff. Figure \ref{Fig_TwoPlayerThreeAction} is the  numerical simulation of how the two ruling strategies derived in this example control the payoff pairs of the game. 
    \label{Emp_TwoPlayerThreeAction}
\end{example}

\begin{figure*}
    \centering
    \includegraphics[width=0.7\textwidth]{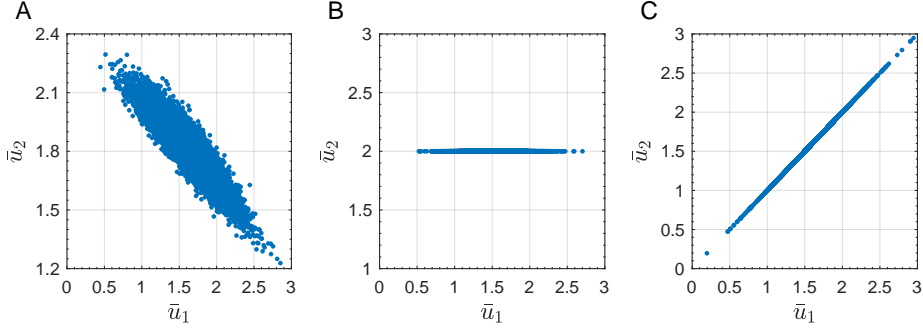}
    \caption{
        \textbf{The payoff control in a two-player three-action game.} Consider the game in Example \ref{Emp_TwoPlayerThreeAction}. 
        (\textbf{A}) Player \(1\) adopts a non-ruling strategy,  \(\mathbf{s}_{C_1}=[0.2,0.5,0.3,0.2,0.4,0.5,0.3,0.5,0.2],\mathbf{s}_{C_2}=[0.4,0.2,0.5,0.6,0.3,0,0.3,0.5,0.5]\).
        (\textbf{B}) Player \(1\) adopts a ruling strategy introduced in Eq.(\ref{Eqt_FixPayoffStrategy}). It sets player 2's payoff to a fixed value regardless of her strategy.
        (\textbf{C}) Player \(1\) adopts a ruling strategy introduced in Eq.(\ref{Eqt_FollowPayoffStrategy}), and unilaterally makes both players' payoff equal. In each panel, player 2' strategy is sampled for \(20000\) times.
    }
    \label{Fig_TwoPlayerThreeAction}
\end{figure*}

\begin{example}[Alliance rules, individuals do not]
    Consider a three-player infinitely repeated public goods game. Each round players choose between two actions: (i) action \(C\): contribute a cost \(3\) into the public pot; or (ii) action \(D\): contribute nothing; The total contribution in the public pot is multiplied by \(2\) and is then evenly distributed among all the three players, regardless of whether she contributed or not. The action profiles and corresponding payoff vectors are given by:
        \begin{equation}
            \footnotesize{
            \begin{blockarray}{cccc}
                \mathbf{a} & \mathbf{u}_1 & \mathbf{u}_2 & \mathbf{u}_3\\
                \begin{block}{c[c][c][c]}
                    CCC & 3 & 3 & 3 \\
                    CCD & 1 & 1 & 4 \\
                    CDC & 1 & 4 & 1 \\
                    CDD & -1 & 2 & 2 \\
                    DCC & 4 & 1 & 1 \\
                    DCD & 2 & -1 & 2 \\
                    DDC & 2 & 2 & -1\\
                    DDD & 0 & 0 & 0 \\
                \end{block}
            \end{blockarray}
            }.
        \end{equation}
    According to Theorem \ref{theorem:PDv_for_M-1strategy}, player \(1\) has two ruling vectors, 
    \begin{equation}
        \footnotesize{
            \begin{blockarray}{ccc}
                \mathbf{a} & \tilde{\mathbf{u}}_1 & \tilde{\mathbf{u}}_2\\
                \begin{block}{c[cc]}
                CCC & p_{C\mid CCC}-1 & p_{D\mid CCC}\\
                CCD & p_{C\mid CCD}-1 & p_{D\mid CCD}\\
                CDC & p_{C\mid CDC}-1 & p_{D\mid CDC}\\
                CDD & p_{C\mid CDD}-1 & p_{D\mid CDD}\\
                DCC & p_{C\mid DCC} & p_{D\mid DCC}-1\\
                DCD & p_{C\mid DCD} & p_{D\mid DCD}-1\\
                DDC & p_{C\mid DDC} & p_{D\mid DDC}-1\\
                DDD & p_{C\mid DDD} & p_{D\mid DDD}-1\\
                \end{block}
            \end{blockarray}
        }.
    \end{equation}
    But since \(\tilde{\mathbf{u}}_1 + \tilde{\mathbf{u}}_2=0\), it’s unneccessary to write \(\tilde{\mathbf{u}}_2\) in the equation, as it is linearly dependent on \(\tilde{\mathbf{u}}_1\).
    Suppose she wants to unilaterally pin player \(3\)'s payoff, to achieve this, the equation:
    \begin{equation}
        \footnotesize{
        \begin{blockarray}{ccc}
            \mathbf{a} & \mathbf{u}_3 & \mathbf{1}\\
            \begin{block}{c[cc]}
                CCC & 3 & 1\\
                CCD & 4 & 1\\
                CDC & 1 & 1\\
                CDD & 2 & 1\\
                DCC & 1 & 1\\
                DCD & 2 & 1\\
                DDC & -1 & 1\\
                DDD & 0 & 1\\
            \end{block}
        \end{blockarray}
        \begin{bmatrix}
            \alpha_3\\
            \gamma\\
        \end{bmatrix}
        =y_1
        \begin{blockarray}{c}
            \tilde{\mathbf{u}}_1 \\
            \begin{block}{[c]}
                p_{C\mid CCC}-1 \\
                p_{C\mid CCD}-1 \\
                p_{C\mid CDC}-1 \\
                p_{C\mid CDD}-1 \\
                p_{C\mid DCC} \\
                p_{C\mid DCD} \\
                p_{C\mid DDC} \\
                p_{C\mid DDD} \\
            \end{block}
        \end{blockarray}
        }.
    \end{equation} must have a non-zero solution. On the right side of the equation, the third and the fourth element are non-positive and the fifth and the sixth element are non-negative. On the left side of the equation, in row \(4\) and \(6\) the row vectors are the same. Therefore in the solution the dot product between the row vectors \([2,1]\) and the variable vector \([\alpha_3, \gamma]^T\) must be zero. However, this makes the dot product in row \(3\) and \(5\) non-zero, which leads to a contradiction. Therefore, player \(1\) alone cannot pin player \(3\)'s payoff. Since this is a symmetric game, neither can player \(2\).
    Nevertheless, when players \(1\) and \(2\) collaborate, they share a larger amount of ruling vectors. According to Theorem \ref{theorem:PDv_for_M-1Alliance}, we yield three linearly independent ruling vectors. The enlarged ruling space enables payoff control. Analogous calculation yields the following strategies (\(\mathbf{s}^1_C\) denotes player \(1\)'s strategy and player \(2\)'s strategy \(\mathbf{s}^2_C\)):
        \begin{equation}
            \footnotesize{
                    \bar{u}_1=1:
                \begin{bmatrix}
                    \mathbf{s}^1_C\\ 
                    \mathbf{s}^2_C\\
                \end{bmatrix}
                =\begin{bmatrix}
                    0.8 & 0.4 & 1 & 0.6 & 0.5 & 0.1 & 0.7 & 0.3\\
                    0.4 & 0.7 & 0 & 0.3 & 0.5 & 0.8 & 0.1 & 0.4\\
                \end{bmatrix}
            }.
            \label{Eqt_Collaborative_Ruling_Strategy_Protect}
        \end{equation}
        \begin{equation}
            \footnotesize{
                    \bar{u}_3=1:
                \begin{bmatrix}
                    \mathbf{s}^1_C\\ 
                    \mathbf{s}^2_C\\
                \end{bmatrix}
                =\begin{bmatrix}
                    0.6 & 0.7 & 0.4 & 0.3 & 0.4 & 0.5 & 0.2 & 0.1 \\
                    0.7 & 0.4 & 0.3 & 0.1 & 0.8 & 0.5 & 0.4 & 0.2 \\
                \end{bmatrix}
            }.
            \label{Eqt_Collaborative_Ruling_Strategy_Fix}
        \end{equation}
    In Eq.(\ref{Eqt_Collaborative_Ruling_Strategy_Protect}), the first collaborative ruling strategy set stabilizes player \(1\)'s payoff to \(1\). 
    The second collaborative ruling strategy set fixes player \(3\)'s payoff to \(1\). 
    Therefore, forming an alliance protects a member's payoff from outsiders or control the payoff of outsiders.
    Figure \ref{Fig_AllianceRules} is a numerical simulation of how the two collaborative ruling strategy sets derived in this example control the payoffs of the game.
    \label{Exp_AllianceRules_IndividualsDont}
\end{example}

\begin{figure*}
    \centering
    \includegraphics[width=0.7\textwidth]{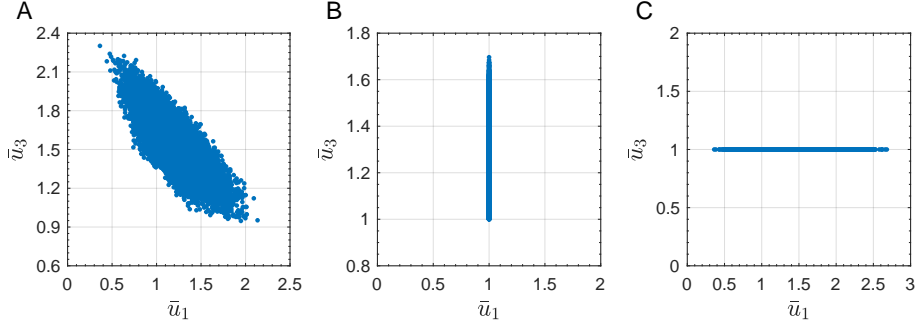}
    \caption{
        \textbf{The payoff control by alliance.} Consider the game in Example \ref{Exp_AllianceRules_IndividualsDont}. Player \(1\) and player \(2\) form an alliance against player \(3\). 
         (\textbf{A}) Player \(1\) and player \(2\) adopt non-collaborative ruling strategies, \(\mathbf{s}^1_C=[0.2,0.9,0.7,0.5,0.3,0.1,0.8,1],\mathbf{s}^2_C=[0.1,0.6,0,0.7,0.8,0,0.8,0.3]\).
        (\textbf{B}) Player \(1\) and player \(2\) adopt collaborative ruling strategies introduced in Eq.(\ref{Eqt_Collaborative_Ruling_Strategy_Protect}). They set player \(1\)'s payoff to a fixed value of \(1\), regardless of player \(3\)'s strategy. (\textbf{C}) Player \(1\) and player \(2\) adopt collaborative ruling strategies introduced in Eq.(\ref{Eqt_Collaborative_Ruling_Strategy_Fix}). They collectively set player $3$'s payoff to \(1\). In each panel, player 3's strategy is sampled for \(20000\) times. 
    }
    \label{Fig_AllianceRules}
\end{figure*}

We've shown how to find ruling strategies and collaborative ruling strategy sets in repeated games with infinite expected number of rounds and \(\delta\)-repeated games. 
The following theorem indicates that in a two-player two-action repeated game, ruling vectors for strict Markov strategies only exist in the above two cases.
If the player uses the same mixed action under every history, it indicates that the player has zero memory capacity and is unable to distinguish different histories. The strategy used is a degenerate case of Markov strategy. We term it \textit{memory-zero strategy}, and the rest of Markov strategies \textit{strict Markov strategies}.

\begin{theorem}[Existence theorem of ruling vectors for strict Markov strategies]
    In a two-player two-action repeated game, for every strict Markov strategy there exists a ruling vector if and only if the game satisfies one of the following conditions:
        \begin{itemize}
            \item The game is a \(\delta\)-repeated game;
            \item The expected number of rounds of the game is infinite.
        \end{itemize}
    \label{Thm_Existence_of_PDvecs}
\end{theorem}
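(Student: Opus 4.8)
The proof divides into the two implications, and the ``if'' direction is almost immediate. For ``if'': if the game has infinite expected length, Theorem~\ref{theorem:PDv_for_M-1strategy} already supplies the vector $\mathbf{s}_{a_1^j}-\mathbf{s}^{Rep}_{a_1^j}$ as a ruling vector, and if the game is $\delta$-repeated it supplies $\delta\mathbf{s}_{a_1^j}+(1-\delta)s_{a_1^j\mid 0}\mathbf{1}-\mathbf{s}^{Rep}_{a_1^j}$; the only point to check is that such a vector can be chosen \emph{nonzero} for a strict Markov strategy, so that it genuinely constrains payoffs. A direct computation gives this: the $\delta$-repeated vector never vanishes when $\delta<1$, and the infinite-length vector vanishes exactly for the ``repeat-your-own-last-action'' strategy, for which player~$1$'s per-round action distribution stays pinned at $s_{a_1^j\mid 0}$, so $\mathbf{s}^{Rep}_{a_1^j}-s_{a_1^j\mid 0}\mathbf{1}$ serves as a nonzero ruling vector instead.

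For the ``only if'' direction I would prove the contrapositive: if the game is neither $\delta$-repeated nor of infinite expected length, I exhibit a \emph{single} strict Markov strategy $s_1$ (for the would-be ruler, player~$1$) that admits no nonzero ruling vector. Because $S:=\sum_t p(t)<\infty$, the defining limit collapses, so $\tilde{\mathbf{u}}$ is a ruling vector iff $\sum_{t\ge 1}p(t)\langle\tilde{\mathbf{u}},\mathbf{v}^t\rangle=0$ for \emph{every} opponent strategy. I would then fix $s_1$ to be a strict Markov strategy in general position (all conditional cooperation probabilities, and $s_{C\mid 0}$, taken generically in $(0,1)$) and probe this identity with structured opponents: for an opponent playing a constant mixed action $\beta$ in every round, player~$1$'s cooperation probability follows a scalar affine recursion $q^{t+1}=\lambda(\beta)q^t+\mu(\beta)$ with $\lambda,\mu$ affine in $\beta$, and the identity becomes one equation in $\tilde{\mathbf{u}}$ whose coefficients are built from the generating function $g(x)=\sum_t p(t)x^{t-1}$ evaluated at $x=\lambda(\beta)$; allowing the opponent a small number of modified initial moves before settling to $\beta$ turns this into a polynomial identity in the extra parameters.

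The heart of the argument is a rank computation on these families of linear constraints. Exploiting genericity of $s_1$ --- in particular that $\lambda(\cdot)$ is a non-constant affine function of $\beta$ and that no pole of $g$ is cancelled by the data of $s_1$ --- the constant-opponent equations already force $\tilde{\mathbf{u}}$ into $\mbox{span}\{\mathbf{s}_C,\mathbf{s}^{Rep}_C,\mathbf{1}\}$ and force $g$ to be a M\"obius function of $x$, which is equivalent to $c(t)$ being constant for all $t\ge 2$; the constraints coming from a modified first move then force that constant to equal $c(1)$ as well, i.e.\ the game is genuinely $\delta$-repeated --- contradicting the standing hypothesis. Hence $\tilde{\mathbf{u}}=\mathbf{0}$, and this $s_1$ has no ruling vector. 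The same genericity correctly sidesteps the degenerate strict Markov strategies --- the ``repeat'' strategy, and those for which $\lambda(\cdot)$ is a constant function --- which do carry a ruling vector in \emph{every} game, so they cannot be used for the contrapositive. I expect the real work to be in this last step: organizing the polynomial-identity bookkeeping so that it handles \emph{all} finite-length, non-$\delta$ continuation patterns uniformly (including $g$ that is rational of high degree, or not rational at all) rather than by ad hoc case analysis, and pinning down exactly which open conditions on $s_1$ guarantee that the resulting linear systems have full rank.
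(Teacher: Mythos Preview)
Your ``if'' direction matches the paper's (both cite Theorem~\ref{theorem:PDv_for_M-1strategy}); your extra care about the vector being nonzero is a point the paper glosses over.

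For ``only if'' you take a genuinely different route. The paper's sketch works with the full $4\times 4$ Markov transition matrix $\mathbf{M}$ determined by both players' strategies: since the expected number of rounds is finite, the averaging operator $\overline{\mathbf{M}}=\sum_t p(t)\mathbf{M}^{t-1}/\sum_t p(t)$ is invertible, and by Cayley--Hamilton its inverse is a cubic polynomial $b_0 I+b_1\mathbf{M}+b_2\mathbf{M}^2+b_3\mathbf{M}^3$ with $\sum b_i=1$; the paper then argues that a vector depending only on player~$1$'s strategy can be orthogonal to $\bar{\mathbf{v}}=\mathbf{v}^1\overline{\mathbf{M}}$ for all opponent choices only when $b_2=b_3=0$, which characterises $\delta$-repeated games. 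Your approach instead fixes a single generic $s_1$, probes with memory-zero opponents to collapse the dynamics to a scalar recursion, and reads the constraint as a functional equation in the generating function $g(x)=\sum_t p(t)x^{t-1}$ along the line $x=\lambda(\beta)$; a nonzero $\tilde{\mathbf{u}}$ then forces $g$ to agree with a M\"obius function on an interval, hence everywhere, and a first-round perturbation pins the remaining constant. Your method is more elementary (no Cayley--Hamilton, no matrix inversion) and makes the role of the continuation sequence $p(t)$ transparent through $g$, but it buys this by leaning on genericity of $s_1$ and on an analytic-continuation step, whereas the paper's matrix argument is more uniform in $s_1$ at the cost of heavier algebra. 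Both are presented as sketches with the hardest bookkeeping deferred; your honest flag that the rank computation is ``the real work'' is accurate, and the analogous gap in the paper is the unproved claim that eliminating player~$2$'s parameters forces $b_2=b_3=0$.
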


\begin{proof} 
    We present a sketch of our proof.
    The existence of ruling vectors in the two cases is shown in Theorem \ref{theorem:PDv_for_M-1strategy} and \ref{theorem:PDv_for_M-1Alliance}. 
    Thus we mainly focus on proving the non-existence of ruling vectors in generalized games with finite expected rounds and are not \(\delta\)-repeated games.
    Suppose that we want to find a ruling vector for player \(1\).
    In a two-player two-action game, the inner product between a ruling vector and the probability distribution of that vector should always be zero.
    To prove non-existence of ruling vectors in a \(2\times 2\) generalized game, we show that under other conditions, for each vector, there exists a Markov strategy for the other player to use such that the inner product with the average distribution is not zero.
    We focus on such a strategy that satisfy the following conditions: (i) It is a Markov strategy. (ii) The average distribution \(\bar{\mathbf{v}}\) always exist. In this case, strategy \(\mathbf{s}_1,\mathbf{s}_2\) imply a markov matrix \(\mathbf{M}\) with stationary vector \(\mathbf{v}_{inv}\) \cite{Hauert1997}.

    First, for games with finite expected number of rounds, we proved that for matrix series \(\overline{\mathbf{M}}\) such that:
        \begin{equation}
            \begin{split}
                \bar{\mathbf{v}}
                &= \mathbf{v}^1\lim_{t\to\infty}\frac{p(1)\mathbf{I} + p(2)\mathbf{M} + \dotsb + p(t)\mathbf{M}^{t-1}} {p(1) + p(2) + \dotsb + p(t)}\\
                &= \mathbf{v}^1\overline{\mathbf{M}}
            \end{split}
        \end{equation}
    there exists an inverse \(\overline{\mathbf{M}}^{-1}=b_0I+b_1M + b_2 M^2 + b_3 M^3\) which \(b_0+b_1+b_2+b_3=1\) such that \(\mathbf{v}^1=\bar{\mathbf{v}}\overline{\mathbf{M}}^{-1}\). 
    Therefore, each entry in vector \(\mathbf{v}^1\) implies the result of an inner product between the column vectors of the matrix \(\overline{\mathbf{M}}^{-1}\), and the average distribution \(\bar{\mathbf{v}}\). 
    Every vector in \(\mathbb{R}^4\) is a linear combination of the \(4\) column vectors in \(\overline{\mathbf{M}}^{-1}\).
    Therefore, its inner product with \(\bar{\mathbf{v}}\) can be calculated by the same linear combination of entries of \(\mathbf{v}^1\).
    Each entry in the ruling vector should be determined by player \(1\)'s strategy, thus a valid ruling vector should eliminate parameters related to player \(2\)'s strategy to stay invariant when player \(2\)'s strategy changes.
    Also, the inner product between \(\bar{\mathbf{v}}\) and a valid ruling vector should be zero. 
    According to these conditions, We prove that only when \(b_2=b_3=0\) do ruling vectors exist. Therefore we have \(b_0+b_1=1\). Only \(\delta\)-repeated games satisfies such property, therefore finishing the proof. 

\end{proof}

This theorem presents the conditions for the existence of ruling vectors, indicating that strict Markov ruling strategies only exist in games with infinite expected rounds, and \(\delta\)-repeated games.
This theorem also suggests that the existence of ruling vectors is entirely determined by the continuation probability \(c\).
Nevertheless, it doesn't imply that ruling strategies derived from the two cases are totally not effective in games with a different \(c\). 
\cite{HilbeTor2014} proved that if the game is played sufficiently many rounds, the rule enforced will be a strict linear rule.

\section{DISCUSSION AND CONCLUSIONS}
\label{Sec_Discussion}

Previous studies in repeated games focused on the evolutionary stability of strategies. 
The issues of payoff control have received much less attention.
We defined a new class of strategies, namely ruling strategies, which are able to unilaterally set a linear payoff rule between the focal and other players.
Instead of adopting the previous determinant method \cite{Press2012}, we focused on the algebraic structure of ruling vectors. 
Firstly, we showed that payoffs can be given by the inner product between the action distribution and payoff vector.
Secondly, we defined ruling vectors and proved that ruling vectors yield a linear space, and so do payoff vectors. 
Finally, we proved that an overlap between the ruling space and the linear span of payoff vectors leads to payoff control.
With the aid of the algebraic perspective, we have provided a novel algorithm to find a ruling strategy and have shown that the existence of ruling vectors is only dependent on continuation probability, but not on payoffs.

Our algorithm overcomes the curse of dimensionality.
For instance, as the number of players increases, by the determinant method \cite{Press2012}, the size of the transition matrix increases exponentially, whereas in our algorithm, the size of the equation grows linearly.
Our method also facilitates the search for ruling strategies in games with arbitrary actions, and with asymmetric payoff vectors. Furthermore, our method allows finding ruling strategies for an alliance, which is typically challenging by the determinant framework \cite{Press2012}.

Our theory could be applied to games on networks \cite{SuAMing2019}, games with continuation probability dependent on the state, and even to stochastic games. Therefore, it opens an avenue to theoretically tackle payoff control problems.

\printbibliography[title = {REFERENCES}]

\end{document}